\newtheorem{theorem}{Theorem}
\theoremstyle{plain}
\newtheorem{lemma}[theorem]{Lemma}
\theoremstyle{definition}
\newcommand{\R}{{\mathbb R}}
\newcommand{\Z}{{\mathbb Z}}
\newcommand{\C}{{\mathbb C}}
\newcommand{\N}{{\mathbb N}}
\newcommand{\X}{{\mathbb X}}
\newcommand{\Ca}{{\mathbb O}}
\newcommand{\Ha}{{\mathbb H}}
\newcommand{\Fi}{{\mathbb F}}
\newcommand{\Sch}{{\mathcal C^2}}
\newcommand{\A}{{\mathcal A}}
\newcommand{\F}{{\mathcal F}}
\renewcommand{\S}{{\mathcal S}}
\renewcommand{\Re}{{\mathop{\mathrm{Re}}\,}}
\renewcommand{\Im}{{\mathop{\mathrm{Im}}\,}}
\def\Sp{\mathrm{Sp}}
\def\U{\mathrm{U}}
\def\F{\mathrm{F}}
\def\sh{\sinh}
\def\e-s{e^{-s}}
\def\de-s{e^{-2s}}
\begin{document}

\title[Asymptotics for the Radon transform]
{Asymptotics for the Radon transform\\on hyperbolic spaces}
\author[Andersen and Flensted--Jensen]
{Nils Byrial Andersen and Mogens Flensted--Jensen}
\address{Department of Mathematics,
Aarhus University,
Ny Munkegade 118,
Building 1530,
DK-8000 Aarhus C,
Denmark}
\email{byrial@imf.au.dk}
\address{Department of Mathematical Sciences,
University of Copenhagen, Universitetsparken 5,
DK-2100 Copenhagen {\O}, Denmark}
\email{mfj@life.ku.dk}
\begin{abstract}
Let $G/H$ be a hyperbolic space over $\R, \, \C$ or $\Ha$, and
let $K$ be a maximal compact subgroup of $G$. Let $D$ denote a certain explicit
invariant differential operator, such that the non-cuspidal discrete series belong to the kernel of $D$.
For any $L^2$-Schwartz function $f$ on $G/H$, we prove that
the Abel transform $\A(Df)$ of $Df$ is a Schwartz function. This is an extension of a result
established in \cite{AF-J} for $K$-finite and $K\cap H$-invariant functions.
\end{abstract}
\maketitle

\section{Introduction}

The {\it Radon transform} $R$ on the hyperbolic spaces $G/H$,
\[
R f=  \int_{N^*} f(\cdot nH)\,dn,
\]
where $N^*\subset G$ is a certain unipotent subgroup,
and the associated {\it Abel transform} $\A$,
were introduced and studied in \cite{AF-JS} and \cite{AF-J}.
Generalizing Harish-Chandra's notion of cusp forms for real
semisimple Lie groups, a discrete series is said to be {\it cuspidal} if it is annihilated
by the Radon transform. In contrast with the Lie group case, however,
{\it non-cuspidal} discrete series exist. For the projective hyperbolic spaces, these are precisely
the spherical discrete series, but for some real non-projective hyperbolic spaces, there also exist non-spherical non-cuspidal
discrete series.

Let $\Sch(G/H)$ denote the space of $L^2$-Schwartz functions on $G/H$.
Except for some boundary cases, $\A$ maps $\Sch(G/H)$ into Schwartz functions
in the absense of non-cuspidal discrete series.
On the other hand, $\A f$ can be explicitly calculated
for functions $f$ belonging to the non-cuspidal discrete series.
To complete the picture, we prove below that $\A$ essentially maps
the orthocomplement in $\Sch(G/H)$ of the non-cuspidal discrete series into Schwartz functions.
To be more precise, let
$\Delta_\rho =\Delta + \rho_{\mathfrak q}^2$, where $\Delta$ denotes the Laplace--Beltrami operator on $G/H$,
and consider the $G$-invariant differential operator
$D = \Delta_\rho (\Delta_\rho - \lambda_1^2) \dots (\Delta_\rho - \lambda_r^2)$,
where $\lambda_1, \dots, \lambda_r$ are the parameters of the non-cuspidal discrete series.
Then $\A(Df)$ is a Schwartz function.
This extends our previous result, \cite[Theorem~6.1~(vi)]{AF-J}, valid only for the dense $G$-invariant subspace of $\Sch(G/H)$
generated by the $K$-irreducible $(K\cap H)$-invariant functions, to all Schwartz functions.

In \cite{AF-J} we also considered the exceptional case corresponding to the Cayley numbers $\Ca$.
We expect our new result to hold for this case as well,
but we have not been through the rather cumbersome details.

The second author wants to thank Professor Vladimir Molchanov for the invitation to visit
Tambov University, where our results were first reported, in November 2012.
We would also like to thank Henrik Schlichtkrull and Job Kuit for helpful discussions and comments.

\section{The Radon transform}

In this section, we define the Radon transform  and the Abel transform for the projective
hyperbolic spaces over the classical fields $\Fi = \R, \C$ and $\Ha$.
We have tried to keep the presentation and notation to a minimum, see \cite{AF-JS} and \cite{AF-J} for further details
(including results and proofs).

Let $x\mapsto \overline{x}$ be the standard (anti-) involution of $\mathbb F$.
Let $p\ge 0,\,q \ge 1$ be two integers, and consider the Hermitian form
$[ \cdot ,\cdot ]$ on $\mathbb F ^{p+q+2}$ given by
\[
[ x,y ]  =x_1 \overline{y}_1 + \dots + x_{p+1} \overline{y}_{p+1} - x_{p+2} \overline{y}_{p+2} -
\dots - x_{p+1+q+1} \overline{y}_{p+1+q+1},\,\, (x,y \in \mathbb F ^{p+q+2}).
\]
Let $G = \U (p+1,q+1;\mathbb F)$ denote the group of $(p+q+2)\times
(p+q+2)$ matrices over $\mathbb F$ preserving $[ \cdot ,\cdot ]$.
Thus $\U(p+1,q+1;\R)= \mathrm{O}(p+1,q+1)$,
$\U(p+1,q+1;\C)= \U(p+1,q+1)$ and $\U(p+1,q+1;\mathbb H)= \Sp(p+1,q+1)$
in standard notation.
Put $\U (p;\mathbb F) = \U (p,0;\mathbb F)$, and let $K=\U(p+1;\mathbb F)\times \U(q+1;\mathbb F)$
be the maximal compact subgroup of $G$ fixed by the Cartan involution on $G$.

Let $x_0 = (0,\dots,0,1)^T$, where superscript $T$ indicates transpose.
Let $H=\U (p+1,q;\mathbb F)\times \U(1;\mathbb F)$ be the subgroup of $G$ stabilizing the line $\mathbb F\cdot x_0$
in $\mathbb F^{p+q+2}$.
The reductive symmetric space $G/H$ can be identified with
the projective hyperbolic space $\X = \X (p+1,q+1;\mathbb F)$,
\[
\X= \{ z\in \mathbb F^{p+q+2} : [z,z] = -1\}/\sim,
\]
where $\sim$ is the equivalence relation $z\sim zu,\,u\in \mathbb F^*$.

Let $X_t$, for $t\in \R$, denote the element in the Lie algebra $\mathfrak g$ of $G$
with value $t$ in the $(1,p+q+2)$'th and $(p+q+2,1)$'th matrix entries (the two opposite corners in the anti-diagonal), and $0$ otherwise.
Let $\mathfrak a_{\mathfrak q}$ denote the Abelian subalgebra given by $\{X_t \mid t\in \R \}$,
let $a_{t} = \exp (X_{t})$ denote the exponential of $X_{t}$, and
also define $A_{\mathfrak q}=\exp (\mathfrak a_{\mathfrak q})$.

Let (considered as row vectors)
$$
u=(u_1,\dots,u_{p})\in \mathbb F^{p}\quad\text{and}\quad
v = (v_q, \dots ,v_1)\in\mathbb F^q,
$$
and let $w\in \Im \mathbb F$ (i.e.,\,$w = 0$ for $\mathbb F = \R$).
Define $N_{u,v,w}\in \mathfrak g$ as the matrix given by
\[
N_{u,v,w}=\left(
\begin{array}
[c]{cccc}%
-w & u & v & w\\
-\overline u^T & 0  & 0 & \overline u^T\\
\overline{v}^T & 0 & 0 & -\overline{v}^T \\
-w & u & v & w
\end{array}
\right).
\]
Then $\exp (N_{u,v,w}) =I+ N_{u,v,w}+ 1/2 N_{u,v,w} ^2$, and a small calculation yields that
\begin{equation}\label{aexpN}
a_{t}\exp ( N_{u,v,w}) \cdot x_0=
(\sinh t+ 1/2 e^{t}(|u|^2 - |v|^2)+e^{t}w, \overline u^T; -\overline{v}^T, \cosh t + 1/2 e^{t} (|u|^2 - |v|^2) +e^{t}w)^T,
\end{equation}
for any $t \in \R$.

Define the nilpotent subalgebra $\mathfrak n^*$ as follows,
for $p\ge q$,
\begin{equation} \label{nforpgeq}
\mathfrak n^* = \{N_{u,v,w} : u =
(-\overline{v^r},u'),\,v\in \mathbb F ^{q},\,u'\in \mathbb F^{p-q} \},
\end{equation}
and, for $p<q$,
\begin{equation}\label{nforplq}
\mathfrak n^* = \{N_{u,v,w} : v =
(-\overline{u^r},v'),\,u\in \mathbb F ^{p},\,v' \in \mathbb F^{q-p} \},
\end{equation}
where $u^r,v^r$ means that the order of the indices is reversed. By abuse of notation, we leave out the superscript 
${}^r$ in what follows.

We finally also define the following $\rho$-factors. Let $d=\dim _\R \F$, and let
$\rho _{\mathfrak q} = \frac 12(dp+dq + 2(d-1))\in \R$, and
$\rho _{1} = \frac 12((|dp-dq|+2(d-1))\in \R$.

Let $N^* = \exp(\mathfrak n^*)$ denote
the nilpotent subgroup generated by $\mathfrak n^*$.
For functions $f$ on $G/H$, we define, assuming convergence,
\begin{equation}
\label{defradon}
Rf(g) = \int _{N^*} f(g n^*H)\, dn^*  \qquad (g\in G).
\end{equation}

Let $f\in \Sch(G/H)$, the space of $L^2$-Schwartz functions on $G/H$.
From \cite{AF-JS} and \cite{AF-J}, we know that the Radon transform $Rf$ is a smooth
function. Also, the integral defining $R$ converges uniformly on compact sets, and
$R$ is $G$- and $\mathfrak g$-equivariant.

We define the associated Abel transform $\A$ by $\A f(a)=a^{\rho_1} Rf(a)$, for $a\in A_{\mathfrak q}$.
We are mainly interested in the values of $R f$ and $\A f$ on the elements $a_s$, and thus define
$Rf(s) = Rf(a_s)$, and, similarly, $\A f(s)= \A f (a_s)$, for $s\in \R$.
Let $\Delta$ denote the Laplace--Beltrami operator on $G/H$.
Then, for $f\in \Sch(G/H)$,
\begin{equation}\label{exchange}
\A (\Delta f) =\left (\frac{d^2}{ds^2}- \rho_{\mathfrak q} ^2\right )\A f \qquad (s\in \R).
\end{equation}

Finally, for $R>0$, let $C_R^\infty (G/H)$ denote the subspace of smooth functions on $G/H$
with support in the ($K$-invariant) `ball' $\{ka_s\cdot x_0\mid |s| \le R\}$ of radius $R$.
Similarly, let $C_R^\infty (\R)$ denote the subspace of smooth functions on $\R$
with support in $[-R,R]$, and let $\S (\R)$ denote the Schwartz space on $\R$.

\section{The discrete series and the Abel transform}

Let $q>1$, or $d>1$.
The discrete series for the projective hyperbolic spaces can then be parametrized as
\begin{equation*}
\{T_\lambda \,|\, \lambda = \frac 12 (dq-dp)-1+\mu _\lambda >0, \mu_\lambda \in 2\Z\},
\end{equation*}
see \cite{AF-JS} and \cite{AF-J}.
The spherical discrete series are given by the parameters $\lambda$ for which $\mu_\lambda\le 0$, including
the 'exceptional' discrete series corresponding to $\lambda>0$ for which $\mu_\lambda<0$.

For $q=d=1$, the discrete series is parameterized by $\lambda \in \R\backslash\{0\}$ such that
$|\lambda|+\rho_{\mathfrak q}\in 2\Z$, and there are no spherical discrete series.

The parameters $\lambda$ are, via the formula
$\Delta f=(\lambda^2-\rho_{\mathfrak q} ^2)f$, related to the eigenvalues of 
$\Delta$ acting on functions $f$ in the
corresponding representation space in $L^2(G/H)$.

We have a complete classification of the cuspidal and non-cuspidal discrete series for the projective hyperbolic spaces, also including
information about the asymptotics of the Radon and Abel transforms:
\begin{theorem}\label{Main-thm}
Let $G/H$ be a projective hyperbolic space over $\R, \, \C, \, \Ha$, with $p\ge 0, \, q\ge 1$.
\begin{enumerate}
\item[(i)] If $d(q-p) \le 2$, then all discrete series are cuspidal.
\item[(ii)]If $d(q-p) > 2,$ then non-cuspidal discrete series exists,
given by the parameters $\lambda > 0$ with $\mu_\lambda \le 0$.
More precisely, if $0 \ne f\in \Sch(G/H)$ belongs to $T_\lambda$, then $\A f(s)=C e^{\lambda s}$, with $C \ne 0$.
\item[(iii)] $T_\lambda$ is non-cuspidal if and only if $T_\lambda$ is spherical.
\item[(iv)] If $p\ge q$, and $f\in C^\infty _R (G/H)$, for $R>0$, then $\A f \in C_R^\infty (\R)$.
\item[(v)] If $d(q-p) \le 1$, and $f\in \Sch(G/H)$, then $\A f \in \S (\R)$.
\item[(vi)] Assume $d(q-p) > 1$. Let $D$ be the $G$-invariant differential operator
$\Delta_\rho (\Delta_\rho - \lambda_1^2) \dots (\Delta_\rho - \lambda_r^2)$,
where $\lambda_1, \dots, \lambda_r$ are the parameters of the non-cuspidal discrete series,
and $\Delta_\rho = \Delta + \rho_{\mathfrak q}^2$. Then $\A(Df) \in \S (\R)$, for $f\in \Sch(G/H)$.
\end{enumerate}
\end{theorem}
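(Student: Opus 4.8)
The plan is to transport the whole problem to the real line by means of the Abel transform and thereby reduce $D$ to a constant-coefficient operator. Combining \eqref{exchange} with $\Delta_\rho = \Delta + \rho_{\mathfrak q}^2$ gives the clean intertwining $\A(\Delta_\rho f) = \frac{d^2}{ds^2}\A f$, and iterating over the factors of $D$ yields
\[
\A(Df) = \frac{d^2}{ds^2}\prod_{j=1}^r\Big(\frac{d^2}{ds^2}-\lambda_j^2\Big)\A f =: L\,\A f,
\]
where $L$ is a constant-coefficient differential operator on $\R$. Since $R$, and hence $\A$, is defined and smooth on all of $\Sch(G/H)$, the right-hand side makes sense for every $f\in\Sch(G/H)$, and it remains to prove that $L\,\A f\in\S(\R)$. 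The precise shape of $D$ is chosen so that the kernel of $L$ is exactly the finite-dimensional space spanned by $1$, $s$ and $e^{\pm\lambda_j s}$ for $j=1,\dots,r$.

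The heart of the argument is then the structural claim that, for every $f\in\Sch(G/H)$, the smooth function $\A f$ differs from a Schwartz function only by a linear combination of the elements of $\ker L$; equivalently, that its Fourier transform $\widehat{\A f}(\xi)$ continues meromorphically to a strip about $\R$ with at most simple poles at $\xi=0$ and at the points with $\xi^2=-\lambda_j^2$, and is rapidly decreasing along horizontal lines. The growing exponentials are forced by the non-cuspidal discrete series: by Theorem~\ref{Main-thm}~(ii) the projection of $f$ onto $T_{\lambda_j}$ contributes precisely a term $C_j e^{\lambda_j s}$, corresponding to the pole at $\xi=-i\lambda_j$, while the factor $\frac{d^2}{ds^2}$ in $D$ removes the constant boundary term (the pole at $\xi=0$), which is why that factor must survive even in the case $d(q-p)=2$ where $r=0$. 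The continuous spectrum supplies the Schwartz remainder. To reach this for arbitrary, not necessarily $K$-finite, $f$, I would argue by density: the statement is known on the dense $G$-invariant subspace $V$ generated by the $K$-irreducible $(K\cap H)$-invariant functions by \cite[Theorem~6.1~(vi)]{AF-J}, so it suffices to prove that $f\mapsto\A(Df)$ is continuous from $\Sch(G/H)$ into $\S(\R)$ and then extend from $V$ by continuity, using that $\A(D\,\cdot)$ is continuous into $C^\infty(\R)$ to identify the limit. An alternative is to feed the Plancherel decomposition of $\Sch(G/H)$ directly into the exchange formula and analyze $\A$ on each spectral component with uniform estimates.

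Granting this, $L$ annihilates every element of $\ker L$, so on the Fourier side $\widehat{\A(Df)}(\xi)=(-\xi^2)\prod_{j=1}^r(-\xi^2-\lambda_j^2)\,\widehat{\A f}(\xi)$; the multiplier vanishes exactly at $\xi=0$ and at $\xi=\pm i\lambda_j$, cancelling all the poles and leaving a function holomorphic in the strip and rapidly decreasing on horizontal lines, whence $\A(Df)=L\,\A f\in\S(\R)$ by the Paley--Wiener--Schwartz estimate. The main obstacle is the structural claim of the second paragraph for general Schwartz functions: in \cite{AF-J} the $(K\cap H)$-invariant, $K$-finite case reduces $\A$ to an explicit one-variable transform whose asymptotics are read off directly, and this reduction is unavailable here. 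In the density approach the whole difficulty is concentrated in the rapid-decay (multiplication-by-$s^m$) seminorm bounds, which are the quantitative form of the assertion that, once the discrete-series exponentials and the boundary constant are subtracted, nothing of slow decay remains; establishing these estimates uniformly in $f$, rather than termwise on $K$-types, is the crux of the extension.
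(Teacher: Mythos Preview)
Your reduction $\A(Df)=L\,\A f$ via \eqref{exchange} is correct, and the structural picture you sketch --- that $\A f$ near $+\infty$ is a finite sum of exponentials $e^{\lambda_j s}$, plus a constant, plus a rapidly decreasing remainder, all of which $L$ disposes of --- is exactly what has to be shown. But your proposal does not prove it. The density argument is circular: extending from $\mathcal V$ by continuity into $\S(\R)$ requires bounding every Schwartz seminorm of $\A(Df)$ by Schwartz seminorms of $f$, which is the theorem itself with uniform constants; you concede this when you call the uniform estimates ``the crux of the extension'' without supplying them. The Fourier/meromorphic reformulation is declared equivalent to the structural claim and so advances nothing. (A minor point: $\A f$ is Schwartz at $-\infty$ but has the exponential terms only at $+\infty$, so it is not literally ``Schwartz plus an element of $\ker L$'' globally; this does not affect the conclusion about $L\,\A f$, but the Fourier picture with simple poles is not quite the right formulation.)

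The paper's proof is direct and bypasses both density and spectral decomposition. It writes $Rf(s)$ as an explicit integral, sets $z=e^{-s}$, introduces an auxiliary function $G_f(t_1,t_2,t_3)$, and Taylor-expands $F(z)=e^{ds}Rf(s)$ about $z=0$; the Taylor coefficients $c_j$ produce precisely the exponentials $e^{\lambda_j s}$ (odd $j$ vanish by a parity argument), and the remainder is shown to equal a constant plus a rapidly decreasing term. The device that replaces $K$-finiteness --- and is the idea missing from your outline --- is Lemma~\ref{real case}: the universal enveloping algebra of the \emph{larger} maximal compact $\widetilde K=\mathrm O(d(p+1))\times\mathrm O(d(q+1))$ still preserves $\Sch(G/H)$. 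This permits differentiation of $G_f$ by the rotation $X=t_3\,\partial/\partial t_2-t_2\,\partial/\partial t_3$ while retaining Schwartz-type decay (Lemma~\ref{function G}), which in turn yields the uniform integrability of the $z$-derivatives needed for the Taylor remainder estimate (Lemma~\ref{function dG}). That concrete mechanism for controlling derivatives uniformly over all $f\in\Sch(G/H)$ is what your plan lacks.
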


The above theorem is almost identical to \cite[Theorem~6.1]{AF-J}, except for item (vi), which was only proved for 
functions in the (dense) $G$-invariant subspace $\mathcal V$ of $\Sch(G/H)$ generated
by the $K$-irreducible $(K\cap H)$-invariant functions. Additionally, \cite[Theorem~6.1]{AF-J} furthermore included
the exceptional case corresponding to the Cayley numbers $\Ca$.

Theorem~\ref{Main-thm} (including the reformulation of (vi))
also holds for the real non-projective spaces $SO(p+1,q+1)_e / SO(p+1,q)_e$, except for item (iii),
due to the existence of non-cuspidal non-spherical discrete series corresponding to negative and odd values 
of $\mu_\lambda$ in the exceptional series, see \cite[Section~5]{AF-JS}.

The conditions in (vi) essentially state that $\A f$ is a Schwartz function if $f$ is perpendicular to all non-cuspidal discrete series.
The factor $\Delta_\rho$, however, seems to be necessary (except in the real case with $q-p$ odd),
even for the case $d(q-p)=2$, where there are no non-cuspidal discrete series.

In the next section, we prove Theorem~\ref{Main-thm}(vi).

\section{Proof of Theorem~\ref{Main-thm}(vi)}

First we note, following \cite[Section~10]{AF-J}, that the Schwartz decay conditions are satisfied
near $-\infty$ for $\A(f)$, and thus also for $\A(Df)$.
This leaves us to study the Abel transform near $+\infty$.

Let $f\in \Sch(G/H)$, and write $f[x]=f(gH)$, where $x=g\cdot x_0$. From (\ref{aexpN}) and  (\ref{nforplq}), we get
\begin{align*}
&\qquad \qquad\qquad Rf(s)=\int _{N^*} f (a_s n^*H) dn^* =\\
& \int _{\R ^{dq-dp} \times \R^{dp}\times \R^{d-1}}
f[(\sinh s - 1/2 e^s |v'|^2 +e^s w,u;-u,-v',\cosh s-1/2 e^s |v'|^2+e^s w)]\,dv'\,du\,dw.
\end{align*}
Let $v'=|v'|\overline v$, $v=-\sinh s + 1/2 e^s|v'|^2$, such that $|v'|^2= 1+2e^{-s}v-e^{-2s}$, and $\overline w=e^sw$.
Then,
\begin{align*}
&Rf(s)=e^{-ds}\int_{-\sinh s}^\infty \int _{\mathbb S^{dq-dp-1} \times \R^{dp}\times \R^{d-1}}
f[(\overline w-v,u;-u,-(1+2e^{-s}v-e^{-2s})^{1/2}\overline v,e^{-s}-v+\overline w)]\\
& \qquad \qquad\qquad \qquad \times(1+2e^{-s}v-e^{-2s})^{(dq-dp)/2-1} \, dv\, d\overline v\,du\, d\overline w,
\end{align*}
where $\mathbb S^r$ is the unit sphere in $\R^r$.

We will use the identification of $\X = \X (p+1,q+1;\mathbb F)$ with
\[
\X= \{ z\in \mathbb F^{p+q+2} : [z,z] < 0\}/\sim,
\]
and identify a function $f$ on $\X$ with a homogeneous function
on $\{z\in \mathbb F^{p+q+2} : [z,z] < 0\}$ of degree zero.

We now identify $\mathbb F^{p+q+2}$ with $\R^{d(p+q+2)}$ such that the coordinates satisfy 
$\Re z_{j}=x_{dj}$, for $j=1,\dots, p+q+2$.
Consider the real hyperbolic space
$\widetilde \X =\{z\in \mathbb F^{p+q+2} : [z,z] = -1\}$.
The group $\widetilde G= \mathrm O(d(p+1),d(q+1))$ acts transitively on $\widetilde \X$.
Let $\tilde K = \mathrm O(d(p+1))\times \mathrm O(d(q+1))$ denote the standard maximal compact subgroup of $\widetilde G$.
Let $U(\widetilde{\mathfrak k})$, respectively $U({\mathfrak k})$, denote the universal enveloping algebra of the
Lie algebra $\widetilde{\mathfrak k}$ of $\widetilde K$, respectively of the
Lie algebra ${\mathfrak k}$ of $K$.
\begin{lemma}\label{real case}
Let $U\in U(\tilde {\mathfrak k})$, then
$U$ maps $\Sch(G/H)$ into itself.
\end {lemma}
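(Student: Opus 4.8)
The plan is to pass to the real hyperbolic space $\widetilde\X=\widetilde G/\widetilde H$, where $\widetilde H=\mathrm O(d(p+1),d(q+1)-1)$ is the stabiliser in $\widetilde G$ of $x_0$, and to prove the statement as an assertion about the Harish--Chandra Schwartz space of $\widetilde\X$. I realise $f\in\Sch(G/H)$ as the attached degree-zero homogeneous (equivalently $\U(1;\mathbb F)$-invariant) function on $\widetilde\X$. The structural facts I would lean on are: (i) the smaller group $G\subset\widetilde G$ already acts transitively on $\widetilde\X$, with isotropy $G\cap\widetilde H=\U(p+1,q;\mathbb F)$ at $x_0$; (ii) the radial subgroup $A_{\mathfrak q}$ and the algebra $\mathfrak a_{\mathfrak q}$ are common to both spaces; and (iii) the rank-one space $\widetilde\X$ has $\widetilde\rho=\tfrac12(d(p+q+2)-2)=\rho_{\mathfrak q}$, so the two $\rho$-factors agree. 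Granting the embedding $\Sch(G/H)\hookrightarrow\Sch(\widetilde G/\widetilde H)$, the lemma follows at once: the Schwartz space of $\widetilde\X$ is by construction stable under left differentiation by $U(\widetilde{\mathfrak g})$, and $U(\widetilde{\mathfrak k})\subset U(\widetilde{\mathfrak g})$, so $Uf$ is again Schwartz on $\widetilde\X$ — which is precisely what the subsequent estimates require.

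So the content is the embedding, and I would establish it by comparing the defining seminorms. Writing a point of $\widetilde\X$ in polar form $\widetilde k\,a_s\cdot x_0$ ($\widetilde k\in\widetilde K$, $s\ge0$), both the distance function $\sigma$ and the spherical weight $\Xi$ depend only on $s$, and by (iii) the weights for $G/H$ and for $\widetilde\X$ agree up to a factor polynomial in $|s|$; such factors are harmless, being absorbed by increasing the polynomial order $N$ in the target seminorm. Hence the only real issue is to dominate the $U(\widetilde{\mathfrak g})$-derivatives $\widetilde u f$ by the $U(\mathfrak g)$-derivatives $uf$. By (i) the fundamental vector fields coming from $\mathfrak g$ span the tangent space of $\widetilde\X$ at every point, so each $\widetilde X\in\widetilde{\mathfrak g}$ may be written pointwise as $\widetilde X_x=\sum_i c_i(x)(Y_i)_x$ with $Y_i$ a fixed basis of a complement to the isotropy in $\mathfrak g$ and $c_i$ smooth. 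Applying the Poincaré--Birkhoff--Witt theorem and Leibniz' rule then expresses $\widetilde u f$ as a finite sum of terms (coefficient)$\cdot\,uf$ with $u\in U(\mathfrak g)$, where the coefficients are iterated $\mathfrak g$-derivatives of the $c_i$.

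The main obstacle will be the uniform control of these transition coefficients at infinity: one must show that the $c_i$ and all their iterated $\mathfrak g$-derivatives grow at most polynomially in $|s|$ as $s\to\infty$, uniformly over $\widetilde\X$, so that they are absorbed into the weight. A priori the change of frame between the $\widetilde G$- and the $G$-fundamental vector fields could deteriorate exponentially along the radial direction; ruling this out is where I would use that $\widetilde K$ acts by genuine rotations in the real coordinates $x_{dj}$, so that $\widetilde{\mathfrak k}$ is realised by the explicit vector fields $x_i\partial_{x_j}-x_j\partial_{x_i}$, together with the common splitting along $\mathfrak a_{\mathfrak q}$, evaluating the two frames in Cartan coordinates. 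For the subalgebra $\mathfrak k\subset\widetilde{\mathfrak k}$ there is nothing to prove, since $K\subset G$ and hence $U(\mathfrak k)$ preserves $\Sch(G/H)$ outright, including its $\U(1;\mathbb F)$-invariance; the estimate only has to be carried out for the finitely many generators of $\widetilde{\mathfrak k}$ transverse to $\mathfrak k$, after which the $U(\widetilde{\mathfrak g})$-stability of $\Sch(\widetilde G/\widetilde H)$ upgrades the first-order bound to all orders.
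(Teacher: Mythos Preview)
Your framework is sensible, but the proof has a genuine gap precisely at the point you flag as ``the main obstacle''. The embedding $\Sch(G/H)\hookrightarrow\Sch(\widetilde G/\widetilde H)$ is not something you can invoke; it is at least as hard as the lemma itself, since by definition it asks you to control \emph{all} $U(\widetilde{\mathfrak g})$-derivatives of $f$ --- in particular the $U(\tilde{\mathfrak k})$-derivatives you are trying to bound. Your plan to write $\widetilde X_x=\sum_i c_i(x)(Y_i)_x$ with $Y_i\in\mathfrak g$ and then ``rule out'' exponential growth of the $c_i$ by inspecting rotation vector fields in Cartan coordinates is not carried out, and in general such frame changes between two transitive actions \emph{do} blow up along $A_{\mathfrak q}$. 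The final bootstrap (``first-order bound upgrades to all orders via $U(\widetilde{\mathfrak g})$-stability of $\Sch(\widetilde G/\widetilde H)$'') is also circular: that stability helps only once you already know $f\in\Sch(\widetilde G/\widetilde H)$, which is the very embedding in question.

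The paper closes this gap by a clean algebraic observation that replaces your frame estimate entirely. One has $\tilde{\mathfrak k}=\mathfrak k+\tilde{\mathfrak m}$, where $\tilde{\mathfrak m}$ is the centraliser of $A_{\mathfrak q}$ in $\mathrm{Lie}(\widetilde K\cap\widetilde H)$; since $\tilde{\mathfrak m}$ both commutes with $a_s$ and fixes $x_0$, it annihilates every radial point $a_s\cdot x_0$. Writing a general point as $ka_s\cdot x_0$ with $k\in K$ and conjugating $U\in U(\tilde{\mathfrak k})$ by $k$, one reduces $U_k=\mathrm{Ad}(k)U$ modulo the left ideal generated by $\tilde{\mathfrak m}$ to some $U_k^0\in U(\mathfrak k)$, so that $Uf[ka_s\cdot x_0]=\sum_i u_i(k)\,U_if[ka_s\cdot x_0]$ with $U_i\in U(\mathfrak k)$ fixed and $u_i$ continuous on the \emph{compact} group $K$. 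The coefficients are therefore bounded with all derivatives, and no growth analysis at infinity is needed. This is exactly the ``controlled $c_i$'' you were looking for, obtained structurally rather than by estimation.
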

\begin{proof}
The lemma is obvious for $d=1$. So assume $d> 1$. 
We note that any element $x\in \widetilde \X$ can be written as $x=k a \cdot x_0$,
where $k \in K$, and $a=a_s, \, s\ge 0$.
Let $\widetilde H=\mathrm O(d(p+1),d(q+1)-1)$, and let $\tilde{\mathfrak m}$
denote the commutator of $A_{\mathfrak q}$ in the Lie algebra of $\widetilde K \cap \widetilde H$.
Then $\tilde {\mathfrak k} = \mathfrak k + \tilde{\mathfrak m}$.

Let $U_k= Ad(k)U$, for $k\in K$, then $Uf=(Ad(k^{-1})U_k)f$.
By the Campbell--Baker--Hausdorff formula, there exists an element $U^0_k \in U(\mathfrak k)$,
such that $U_k = U^0_k$ modulo the left ideal generated by $\tilde{\mathfrak m}$.
This implies that $Uf[ka\cdot x_0]=(Ad(k^{-1})U_{k}^0)f[ka\cdot x_0]$.
The map $k \mapsto Ad(k^{-1})U_k^0$ is continuous into a finite dimensional
subspace of $U({\mathfrak k})$, and we can write $Uf[ka\cdot x_0]=(Ad(k^{-1})U_{k}^0)f[ka\cdot x_0]=
\Sigma_i u_i(k)U_i f[ka\cdot x_0]$, for a finite set of elements $U_i \in U(\mathfrak k)$ and continuous coefficients $u_i(k)$.
It follows that $Uf$ is in $\Sch(G/H)$.
\end{proof}

Define for $t=(t_1, t_2, t_3) \in \R^3$, the auxiliary function
\begin{align*}
&G_f(t_1,t_2,t_3)= \int _{\mathbb S^{dq-dp-1} \times \R^{dp}\times \R^{d-1}}
f[(\overline w+t_1,u;-u,t_2\overline v,t_3+\overline w)] \,d\overline v\, du\, d\overline w,
\end{align*}
and, with the identification $z=e^{-s}$, define the function $F(z)=e^{ds}Rf(s)$. Then, since $\sinh s= -(z-z^{-1})/2$, we get
\begin{equation}\label{F_int}
F(z)=\int_{(z-z^{-1})/2}^\infty G_f(-v, -(1+2zv-z^2)^{1/2}, z-v)(1+2zv-z^2)^{(dq-dp)/2-1} \, dv.
\end{equation}

\begin{lemma}\label{function G}
The function $G_f$ is homogeneous of degree $dp+d-1$, for $t \in \{t \mid t_1^2-t_2^2-t_3^2<0 \}$,
$G_f$ is even in $t_2$, and satisfies $G_f(-t_1, t_2,-t_3)=G_f (t_1, t_2,t_3)$.

Let $X$ be the differential operator on $\R^3$ given by $t_3\partial / \partial t_2-t_2\partial/\partial t_3$.
For all $f\in \Sch(G/H)$, and all $k,N \in \N$, there exists a constant $C$, such that
\[
|X^k G_f(t)|\le C(t_2^2+t_3^2)^{-d(q-p)/4}(1+\log(t_2^2+t_3^2))^{-N},
\]
for all $t \in \{t \mid  t_1^2-t_2^2-t_3^2=-1 \}$.
\end {lemma}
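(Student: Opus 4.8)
\emph{The structural properties.} I would read off the three symmetries from changes of variables in the defining integral, writing $z=z(t,\overline w,u,\overline v)=(\overline w+t_1,u;-u,t_2\overline v,t_3+\overline w)$ for the argument of $f$ and using that $f$, being homogeneous of degree zero on $\{[z,z]<0\}$, satisfies $f[\lambda z]=f[z]$ for $\lambda>0$ and $f[-z]=f[z]$. A short computation gives $[z,z]=t_1^2-t_2^2-t_3^2$, so the cone $t_1^2-t_2^2-t_3^2<0$ is precisely where $z$ lies in the domain of $f$. For homogeneity I would substitute $(\overline w,u)\mapsto(\lambda\overline w,\lambda u)$ in the integral defining $G_f(\lambda t)$: this scales the measure by $\lambda^{(d-1)+dp}$ and turns the argument into $\lambda\,z(t,\overline w,u,\overline v)$, on which $f$ agrees with its value at $z(t,\overline w,u,\overline v)$, giving $G_f(\lambda t)=\lambda^{dp+d-1}G_f(t)$. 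Evenness in $t_2$ follows from $\overline v\mapsto-\overline v$ on the sphere, and $G_f(-t_1,t_2,-t_3)=G_f(t_1,t_2,t_3)$ from combining $f[-z]=f[z]$ with the sign changes $\overline w\mapsto-\overline w$, $u\mapsto-u$, $\overline v\mapsto-\overline v$.

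\emph{Reducing the estimate to $k=0$.} This is where Lemma~\ref{real case} should be used. At $z=z(t,\overline w,u,\overline v)$ the $t_2$-direction is the ambient unit vector $e_{\overline v}$ in the $\mathbb F^{q-p}$-block and the $t_3$-direction is the real axis $e_0$ of the last coordinate; both lie in the negative block $\R^{d(q+1)}$ on which the factor $\mathrm O(d(q+1))$ of $\tilde K$ acts. Writing $L_{\overline v}\in\tilde{\mathfrak k}$ for the infinitesimal rotation in $\mathrm{span}(e_{\overline v},e_0)$, I would check that $Xf[z(t)]=-(L_{\overline v}f)(z(t))$. Since $L_{\overline v}=\sum_i v_iL_i$ depends linearly on $\overline v$ with fixed $L_i\in\tilde{\mathfrak k}$, iterating and differentiating under the integral sign gives
\[
X^kG_f(t)=(-1)^k\sum_{i_1,\dots,i_k}\int_{\mathbb S^{dq-dp-1}}v_{i_1}\cdots v_{i_k}\int_{\R^{dp}\times\R^{d-1}}(L_{i_1}\cdots L_{i_k}f)(z(t))\,du\,d\overline w\,d\overline v.
\]
By Lemma~\ref{real case} each $L_{i_1}\cdots L_{i_k}f$ lies in $\Sch(G/H)$, and the coefficients are bounded on the sphere, so it is enough to prove the stated bound when $k=0$ and $f$ is replaced by an arbitrary $g\in\Sch(G/H)$ (indeed for $|g|$ dominated by a Schwartz function).

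\emph{The case $k=0$.} As recorded in the proof of Lemma~\ref{real case}, every point of $\widetilde\X=\{[z,z]=-1\}$ equals $ka_sx_0$ with $k\in K$ and $s\ge0$, so the Schwartz estimates give $|g(z)|\le C_N(1+s)^{-N}e^{-\rho_{\mathfrak q}s}$ uniformly over the $\tilde K$-orbit. For $z=z(t,\overline w,u,\overline v)$ the radial parameter is read off from the positive block, $\sinh^2s=t_1^2+|\overline w|^2+|u|^2$, so with $r^2:=t_2^2+t_3^2=1+t_1^2$ and $y=(\overline w,u)\in\R^{dp+d-1}$ one has $\cosh^2s=r^2+|y|^2$, hence $e^{-\rho_{\mathfrak q}s}\le(r^2+|y|^2)^{-\rho_{\mathfrak q}/2}$ and $1+s\ge1+\tfrac12\log(r^2+|y|^2)$. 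Integrating out the sphere and passing to polar coordinates $\rho=|y|$, then substituting $\rho=r\sigma$ in $\R^m$ with $m=dp+d-1$, I expect to arrive at
\[
|G_g(t)|\le C_N\,r^{\,m-\rho_{\mathfrak q}}\int_0^\infty(1+\sigma^2)^{-\rho_{\mathfrak q}/2}\bigl(1+\log r+\tfrac12\log(1+\sigma^2)\bigr)^{-N}\sigma^{m-1}\,d\sigma.
\]
Here $m-\rho_{\mathfrak q}=-\tfrac12d(q-p)$, so that $r^{\,m-\rho_{\mathfrak q}}=(t_2^2+t_3^2)^{-d(q-p)/4}$; and bounding $(a+b)^{-N}\le a^{-N}$ with $a=1+\log r$ pulls out $(1+\log r)^{-N}\asymp(1+\log(t_2^2+t_3^2))^{-N}$ times the integral $\int_0^\infty(1+\sigma^2)^{-\rho_{\mathfrak q}/2}\sigma^{m-1}\,d\sigma$, which converges because $d(q-p)>1$ forces integrand decay $\sigma^{-d(q-p)/2-1}$ at infinity. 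The degenerate case $d(p+1)=1$ carries no $y$-integration and follows at once from the pointwise bound.

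\emph{Main obstacle.} The crux is the reduction step: recognizing that $X$ is tangent to the $\tilde K$-rotations in the negative block, so that $X^kG_f$ becomes an integral of $\tilde{\mathfrak k}$-derivatives of $f$ to which Lemma~\ref{real case} applies. The remaining work is a Schwartz-decay estimate weighted by the Jacobian; the only delicate bookkeeping is the $\overline v$-dependence of the rotations $L_{\overline v}$ and the passage of the radial Schwartz bound from the $K$-orbits to all of $\widetilde\X$.
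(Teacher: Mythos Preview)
Your proposal is correct and follows essentially the same route as the paper: the symmetries are read off from the definition, and the key reduction identifies $X$ acting on the integrand with the $\overline v$-dependent rotation $Y_{\overline v}=\sum_i v_i D_i\in U(\tilde{\mathfrak k})$ (your $L_{\overline v}$), after which Lemma~\ref{real case} reduces the $X^k$-estimate to the Schwartz decay of a finite family of functions in $\Sch(G/H)$. The paper records the identity $Xf[z(t)]=Y_{\overline v}f[z(t)]$ and then simply appeals to ``the asymptotic estimates of functions in $\Sch(G/H)$''; your explicit $k=0$ computation with $\cosh^2 s=r^2+|y|^2$ and the scaling $\rho=r\sigma$ is exactly the calculation that substantiates that appeal.
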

\begin{proof}
The first statement follows from the homogeniety of $f$ and the definition of $G_f$.

As before we identify $\mathbb F^{p+q+2}$ with $\R^{d(p+q+2)}$. 
Define, for $i=d(1+2p)+1, \dots , d(1+p+q)$, the differential operator
$D_i f[x]= x_{d(p+q+2)}\partial / \partial x_i f[x]- x_i \partial / \partial x_{d(p+q+2)}f[x]$.
This operator is defined by the left action of an element $T_i$ in $\mathrm O (d(q+1))$ (with
value $1$ in the last entry of the i'th row, value $-1$ in the last entry of the i'th coloum, and $0$ otherwise),
and Lemma \ref{real case} thus gives that $D_i$ maps $\Sch(G/H)$ into itself.

Let now $\overline v = (v_{d(1+2p)+1}, \dots, v_{d(1+p+q)}) \in \mathbb S^{d(q-p)-1}$.
The operator
\[
Y_{\overline v}=\Sigma_{i=2+2p}^{1+p+q} v_i D_i,
\]
also maps $\Sch(G/H)$ into itself, and
\[
|Y_{\overline v}f[x]|\le d(q-p)\max_i (|D_i f[x]|).
\]
Applying the operator $X$ to the integrand in the definition of $G_f$, we get
\begin{align*}
Xf[t_1,u;-u,t_2 \overline v,t_3]&= t_3(\Sigma_{d(1+2p)+1}^{d(1+p+q)} \partial / \partial x_i f[.] v_i - t_2
\partial / \partial x_{d(p+q+2)}f[.]\\
&= t_3(\Sigma_{d(1+2p)+1}^{d(1+p+q)} \partial / \partial x_i f[.] v_i) - t_2 (\Sigma_{d(1+2p)+1}^{d(1+p+q)} v_i^2)
\partial / \partial x_{d(p+q+2)}f[.]\\
&= Y_{\overline v}f[t_1,u;-u,t_2 \overline v,t_3].
\end{align*}
The inequality for $X^k f$ follows from repeated use of this formula and from the asymptotic estimates of functions 
in $\Sch(G/H)$.
\end{proof}

In particular, it follows that the function $v\mapsto X^kG_f(-v, -1, -v)$ has the same parity as $k$.

\begin{lemma}\label{function dG}
Let $k_0$ be the largest integer such that $k_0 < (dq-dp)/2$, and
let $\epsilon =(dq-dp)/2 - k_0$.
Define $t=t(z,v)=(-v, -(1+2zv-z^2)^{1/2}, z-v)$. Then
\begin{itemize}
\item[(i)] For $k\le k_0$, the function
$v \mapsto \partial^k/\partial z^k(G_f(t(z,v))|(1+2zv-z^2)|^{(dq-dp)/2-1})$
is uniformly integrable over $\R$ for $z<1$.
\item[(ii)] For $k \le k_0$ odd, the function $v \mapsto \partial^k/\partial z^k(G_f(t(z,v))(1+2zv-z^2)^{(dq-dp)/2-1})$
is an odd function of $v$ for $z=0$.
\end{itemize}
\end{lemma}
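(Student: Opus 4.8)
The plan is to reduce everything to the estimate of Lemma~\ref{function G} by exploiting two elementary identities for the curve $t(z,v)$. Set $P=P(z,v)=1+2zv-z^2$ and $m=(dq-dp)/2$, so that the exponent is $m-1$, the threshold $k_0$ is the largest integer $<m$, $\epsilon=m-k_0\in(0,1]$, and the decay exponent $d(q-p)/4$ equals $m/2$. A direct computation from the explicit form of $t(z,v)$ gives $t_1=-v$, $t_2=-P^{1/2}$, $t_3=z-v$, whence $t_1^2-t_2^2-t_3^2=-1$ and $t_2^2+t_3^2=1+v^2$; thus $t(z,v)$ always lies on the hyperboloid where Lemma~\ref{function G} applies, and
\[
|(X^iG_f)(t(z,v))|\le C_{i,N}(1+v^2)^{-m/2}(1+\log(1+v^2))^{-N}
\]
for all $i,N$. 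Since $\partial_z t_1=0$, $\partial_z t_3=1$ and $\partial_z t_2=-(v-z)P^{-1/2}$, the chain rule gives, for any smooth $\Psi$,
\[
\partial_z[\Psi(t(z,v))]=P^{-1/2}(X\Psi)(t(z,v)).
\]
Geometrically, fixing $v$ moves $t(z,v)$ along the circle $t_2^2+t_3^2=1+v^2$ in the plane $t_1=-v$, which is the integral curve of $X=t_3\partial_{t_2}-t_2\partial_{t_3}$; this is what lets me trade $z$-derivatives for $X$-derivatives.

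Writing $H(z,v)=G_f(t(z,v))\,P^{m-1}$, I would apply these two identities repeatedly and prove by induction on $k$ that $\partial^k_z H$ is a finite sum of terms
\[
c\,(v-z)^b\,(X^iG_f)(t(z,v))\,P^{\,m-1-a-i/2},\qquad 0\le b\le a,\ a+i\le k,
\]
with constant coefficients $c$: each $\partial_z$ either raises $i$ by one while lowering the $P$-exponent by $1/2$, or lowers the $P$-exponent by one while raising $b$ by one, or lowers $b$ by one, so that $b\le a$ is preserved throughout.

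For part~(i) I would estimate each such term in two regimes. Near the lower endpoint $v_0=(z-z^{-1})/2$, where $P$ vanishes linearly for $z>0$, the most singular factor is $P^{m-1-a-i/2}$; since $a+i/2\le a+i\le k\le k_0<m$, this exponent exceeds $-1$, while $(X^iG_f)(t)$ and $(v-z)^b$ stay bounded, so the term is integrable at $v_0$ — this is precisely where the hypothesis $k\le k_0<m$ enters. As $v\to+\infty$ one has $P\sim 2zv$, and the estimate above together with $b\le a$ gives
\[
|(v-z)^b(X^iG_f)(t)\,P^{m-1-a-i/2}|\lesssim v^{b}\,v^{-m}\,v^{m-1-a-i/2}(\log v)^{-N}\lesssim v^{-1}(\log v)^{-N},
\]
which is integrable once $N>1$. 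Choosing $N$ large then yields the asserted uniform integrability over $\R$ (locally in $z$).

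For part~(ii) I would use a symmetry of $H$. From the explicit form, $t(-z,-v)$ is $t(z,v)$ with $t_1$ and $t_3$ negated and $t_2$ unchanged, while $P(-z,-v)=P(z,v)$; hence the relation $G_f(-t_1,t_2,-t_3)=G_f(t_1,t_2,t_3)$ of Lemma~\ref{function G} gives $H(-z,-v)=H(z,v)$. Differentiating $k$ times in $z$ and setting $z=0$ (where $P=1>0$, so $H$ is smooth in $z$) yields $(\partial^k_z H)(0,v)=(-1)^k(\partial^k_z H)(0,-v)$, i.e. $v\mapsto(\partial^k_z H)(0,v)$ is odd exactly when $k$ is odd. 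The hard part will be the integrability at $v\to+\infty$: the decay $v^{-m}$ of $G_f$ is exactly cancelled by the growth $P^{m-1}\sim v^{m-1}$ and the polynomial factors, leaving the borderline rate $v^{-1}$. This is why the logarithmic refinement $(1+\log(t_2^2+t_3^2))^{-N}$ with arbitrary $N$ in Lemma~\ref{function G} is indispensable, and why the bookkeeping $b\le a$ in the Leibniz expansion must be tracked so that no term beats the threshold; keeping control of the $z$-dependence of the constants as $z\to 0^+$ is the delicate point.
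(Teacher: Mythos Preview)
Your proposal is correct and follows essentially the same route as the paper: the key differentiation identity $\partial_z\bigl(G_f(t(z,v))P^\alpha\bigr)=(XG_f)(t)P^{\alpha-1/2}+2\alpha(v-z)G_f(t)P^{\alpha-1}$ is exactly the formula the paper iterates, your inductive bookkeeping $(a,b,i)$ with $b\le a$, $a+i\le k$ is simply a more explicit version of the paper's ``repeated use of the formula,'' and the decay input from Lemma~\ref{function G} is used identically. For part~(ii) you argue via the global symmetry $H(-z,-v)=H(z,v)$, whereas the paper phrases the same thing through the parity of $v\mapsto X^kG_f(-v,-1,-v)$; the two are equivalent and your formulation is arguably cleaner.
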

\begin{proof}
Notice that $t_1^2-t_2^2-t_3^2=-1$ and $t_2^2+t_3^2=1+v^2$, for
$t=t(z,v)$, and that the integral (\ref{F_int}) is uniformly convergent for $0\le z \le K < \infty$.
The same holds with $G_f$ replaced by $X^kG_f$.

Repeated use of the formula
$\partial/\partial z(G_f(t(z,v))(1+2zv-z^2)^\alpha)=-XG_f(t(z,v))(1+2zv-z^2)^{\alpha-1/2}+2\alpha G_f(t(z,v))(1+2zv-z^2)^{\alpha-1}(z-v)$ yields (i), and together with the parity properties of $X^k G_f$ also gives (ii).
\end{proof}

We notice that $\epsilon =1$ if $d(q-p)$ is even, and $\epsilon= 1/2$ if $d(q-p)$ is odd, i.e.,\,if $d=1$ and $q-p$ is odd.

For $k < k_0$, the derivatives $\partial^k/\partial z^k$ of $G_f(t(z,v))(1+2zv-z^2)^{(dq-dp)/2-1}$
are zero at $v=-\sh s= \frac 12(z-z^{-1})$, whence the integrand is at least
$k_0$ times differentiable near $z=0$, and we can compute the
derivatives $d^k/d z^k F(z)$ by differentiating under the integral sign in (\ref{F_int}).

If $k_0>0$, we can use Taylors formula
to express $F(z)$ as a polynomial of degree $k_0-1$, plus a remainder term
involving $d^{k_0}/dz^{k_0}F(\xi)$, for some $0< \xi(z)< z$,
\[
F(z)= c_0 + c_1 z + c_2 z^2 + ... + c_{k_0-1} z^{k_0-1} + R_{k_0}(\xi) z^{k_0},
\]
where $0 < \xi < z $, and
\begin{align*}
 c_j =
&\frac 1{j!}\int_{-\infty}^\infty
{\frac{d^j}{dz^j}}{\bigg|_{z=0}}(G_f(t(z,v))(1+2zv-z^2)^{(dq-dp)/2-1}) \, dv,
\end{align*}
for $ j\in \{0, \dots, k_0-1\}$.
The remainder term is given by:
\begin{align*}
R_{k_0}(\xi)=
&\frac 1{k_0!}\int_{(\xi-\xi^{-1})/2}^\infty
\frac{d^{k_0}}{dz^{k_0}}\bigg|_{z=\xi} (G_f(t(z,v))(1+2zv-z^2)^{(dq-dp)/2-1}) \, dv.
\end{align*}

Consider $\A f(s)=e^{\rho_1 s} Rf(s) = z^{-(\rho_1 -d)} F(z)$, which is equal to
\begin{equation*}
c_0 z^{-(\rho_1 -d)} + c_1 z^{-(\rho_1 -d-1)} + c_2 z^{-(\rho_1 -d-2)} + ... + c_{k_0-1} z^{-\epsilon} +z^{(-\epsilon + 1)} R_{k_0}(\xi).
\end{equation*}
Here we have used that $\rho_1 -d= d(q-p)/2-1$.
For $j$ even, the exponents $-d(q-p)/2-1 -j$, for $j\in \{0, \dots, k_0-1\}$, correspond
to the parameters $\lambda_1, \dots, \lambda_r$ for the non-cuspidal discrete series, and $c_j=0$ for
$j$ odd, since the integrand is an odd function.

For the real non-projective hyperbolic spaces the condition concerning the parity $j$ does not hold, but in that case
{\it all} the exponents $-d(q-p)/2-1 -j$, for $j\in \{0, \dots, k_0-1\}$, correspond
to parameters $\lambda_1, \dots, \lambda_r$ for the non-cuspidal discrete series, see \cite[Section~3]{AF-JS}

From the definition of the differential operator $D$ and (\ref{exchange}),
we see that $\A (D f)$ at most has a contribution from the remainder term, 
and further that $\A (D f)$ does not have a constant term at $\infty$,
due to the term $d^2/d s^2$.
If $\epsilon=1/2$, the remainder term $e^{-1/2 s} R_{k_0}(\xi(s))$ is clearly rapidly decreasing, and we are thus
left to consider the case $\epsilon =1$, in which case $k_0=d(q-p)/2-1$.

Consider the constant term $C_{R_{k_0}}\ =\lim_{s \to \infty} R_{k_0}(\e-s)$, which could be non-zero.
We want to show that $R_{k_0}(\xi) - C_{R_{k_0}}$ is rapidly decreasing at $+\infty$,
where $\xi = \xi(s)$, with $0 < \xi < \e-s$.
We also include the case $k_0=0$, where we put $\xi=\e-s$.

Define $H(z,v)= \frac{d^{k_0}}{dz^{k_0}} (G_f(t(z,v))(1+2zv-z^2)^{k_0})$.
Then, for $\xi < z < 1$,
\[R_{k_0}(\xi) - C_{R_{k_0}}= \int_{(\xi-\xi^{-1})/2}^\infty (H(\xi,v)-H(0,v))\, dv +
\int_{-\infty}^{(\xi-\xi^{-1})/2} H(0,v)\, dv\, =\, I_1(\xi) + I_2(\xi).
\]
For $I_1(\xi)$, there exists $\xi_1=\xi_1(\xi,v) < \xi$, such that
$H(\xi,v)-H(0,v)= \xi d/dz_{|z=\xi_1} H(z,v)$, and we get:
\[
I_1(\xi) <  z \int_{-\infty}^\infty \left | \frac{d}{dz} \bigg|_{z=\xi_1} H(z,v) \right |\, dv.
\]
By Lemma~\ref{function dG}, the integrand is uniformly integrable for $z<1$,
and we conclude that $I_1(\xi)$ is bounded by $C\e-s$.

For $s$ large, the function $H(0,v)$ is for every $N \in \N$ bounded by
\[
|H(0,v)|\le C(1+v^2)^{-(-d(q-p)/4} |v|^{k_0}\log(1+v^2)^{-N},
\]
for some positive constant $C$. Using this, we find that
\[
I_2(z) < C \int^\infty_{\sh s} v^{-1} (\log (v))^{-N}\, dv = C(N-1)^{-1}(\log(\sh s))^{-N+1} \le C s^{-N+1}.
\]
It follows that $R_{k_0}(\xi) - C_{R_{k_0}}$ is rapidly decreasing at $+\infty$, whence $\A(Df)$
is rapidly decreasing at $+\infty$, which finishes the proof of Theorem~\ref{Main-thm}.

\end{document}